\numberwithin{equation}{section}
\newtheorem{theorem}{Theorem}[section]
\newtheorem{proposition}[theorem]{Proposition}
\newtheorem{corollary}[theorem]{Corollary}
\theoremstyle{definition}
\newtheorem{definition}[theorem]{Definition}
\theoremstyle{remark}
\newtheorem{remark}[theorem]{Remark}
\newtheorem{example}[theorem]{Example}
\newcommand\Spec{\operatorname{Spec}}
\newcommand{\SR}{\operatorname{s-rad}}
\newcommand{\height}{\operatorname{s-ht}}
\newcommand{\OS}{\operatorname{S-Spec}}
\newcommand{\fp}{\frak{p}}
\newcommand{\fq}{\frak{q}}
\begin{document}

\author[]{A. R. Naghipour}
\title[]
{Strongly Prime Submodules}
\address{A.R. Naghipour, Department of Mathematics, Shahrekord
University, P.O.Box 115, Shahrekord, Iran}
\email{Naghipour@sci.sku.ac.ir} \subjclass[2000]{13A10, 13C99,
13E05.}\keywords{Prime submodule, strongly prime submodule,
strongly prime radical.}
\thanks{The author is supported by Shahrekord
University}

\begin{abstract}
Let $R$ be a commutative ring with identity. For an $R$-module
$M$, the notion of strongly prime submodule of $M$ is defined. It
is shown that this notion of prime submodule inherits most of the
essential properties of the usual notion of prime ideal. In
particular, the Generalized Principal Ideal Theorem is extended
to modules.
\end{abstract}

\maketitle

\section*{0. Introduction }
Throughout this paper all rings are commutative with identity and
all modules are unitary. Also we consider $R$ to be a ring and
$M$ a unitary $R$-module.

For a submodule $N$ of $M$, let $(N : M)$ denote the set of all
elements $r$ in $R$ such that $rM\subseteq N$. Note that $(N :
M)$ is an ideal of $R$, in fact, $(N : M)$ is the annihilator of
the $R$-module $M/N$. A proper submodule $N$ of $M$ is called
{\it prime} if $rx\in N$, for $r\in R$ and $x\in M$, implies that
either $x\in N$ or $r\in(N :M)$. This notion of prime submodule
was first introduced and systematically studied in Dauns (1978)
and recently has received a good deal of attention from several
authors, see for example Man and Smith (2002), McCasland and
Smith (1993), McCasland et al. (1997) and Moore and Smith (2002).

In this article, we introduce a slightly different notion of prime
submodule and call it strongly prime submodule. First of all, we
bring a notation.

 \noindent {\bf Notation.} Let $N$ be a submodule of $M$ and let $x\in M$. We
  denote the ideal $(N+Rx :M)$ by
$I_{x}^{N}$. Therefore, $I_{x}^{N}=\{r\in R|rM\subseteq N+Rx\}$.

Let $P$ be a proper submodule of $M$. We say that $P$ is a {\it
strongly prime} submodule if $I_{x}^{P}y\subseteq P$, for $x,y\in
M$, implies that either $x\in P$ or $y\in P$. We call a proper
submodule $C$ of $M$ to be a {\it strongly semiprime} submodule if
$I_{x}^{C}x\subseteq C$, for $x\in M$, implies that $x\in C$.

Note that if we consider $R$ as an $R$-module, then strongly
prime (respectively, semiprime) submodules are exactly prime
(respectively, semiprime) ideals of $R$.

Our definition of strongly prime (respectively, semiprime)
submodule seems more natural, comparing to the usual notion of
prime (respectively, semiprime) ideal of a ring. We will show that
every strongly semiprime submodule of $M$ is an intersection of
strongly prime submodules. Note that this result is not
 true for semiprime submodules, see Jenkins and Smith (1992).

This article consists of two sections. In the first section we
prove some preliminary facts about strongly prime submodules,
which one could expect. In Section 2, as an application of our
result in Section 1, we state and prove a module version of the
Generalized Principal Ideal Theorem.

\vspace{.1in}
\section{Strongly Prime Submodules}
We begin with the following proposition.
\begin{proposition} Let $M$ be an $R$-module. Then the following hold.\\
(1) Any strongly prime submodule of $M$ is prime.\\
(2) Any maximal submodule of $M$ is strongly prime.
\end{proposition}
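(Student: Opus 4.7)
Both parts are direct deductions from the definition of $I_{x}^{P}=\{r\in R\mid rM\subseteq P+Rx\}$ together with the defining property of a strongly prime (resp.\ maximal) submodule. The main (very mild) obstacle is simply choosing the right ``test element'' $y\in M$ to feed into the strongly prime hypothesis in part~(1); once that element is identified, the verification is purely formal.

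\textbf{Proof of (1).} Let $P$ be strongly prime and suppose $rx\in P$ for some $r\in R$, $x\in M$. I want to show $x\in P$ or $r\in(P:M)$. The idea is to test the strongly prime condition at the pair $(x,\,rm)$ for an arbitrary $m\in M$. If $s\in I_{x}^{P}$, then $sM\subseteq P+Rx$, so in particular $sm=p+tx$ for some $p\in P$, $t\in R$. Multiplying by $r$ gives $s(rm)=r(sm)=rp+t(rx)\in P$, since both $rp$ and $trx$ lie in $P$. Thus $I_{x}^{P}(rm)\subseteq P$. By the strongly prime property, either $x\in P$, in which case we are done, or $rm\in P$. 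Since $m\in M$ was arbitrary, the second alternative yields $rM\subseteq P$, i.e.\ $r\in(P:M)$. Either way $P$ is prime.

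\textbf{Proof of (2).} Let $N$ be a maximal submodule of $M$, and suppose $I_{x}^{N}y\subseteq N$ for some $x,y\in M$. If $x\in N$ we are done, so assume $x\notin N$. Then $N+Rx$ is a submodule of $M$ strictly containing $N$, and by maximality $N+Rx=M$. Consequently $I_{x}^{N}=(N+Rx:M)=(M:M)=R$. The inclusion $I_{x}^{N}y\subseteq N$ now reads $Ry\subseteq N$, and since $M$ is unitary we have $y=1\cdot y\in N$. Hence $N$ is strongly prime.
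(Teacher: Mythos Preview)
Your proof is correct and follows essentially the same approach as the paper. For part~(1) the paper argues by contradiction (choosing a single $y$ with $ry\notin P$) while you give the equivalent direct version (showing $rm\in P$ for every $m$ once $x\notin P$), but the key computation $I_{x}^{P}(rm)\subseteq r(P+Rx)\subseteq P$ is identical; part~(2) matches the paper almost verbatim.
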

\begin{proof}
(1) Suppose on the contrary that $P$ is not a prime submodule.
Then there exist $x\in M\setminus P$ and $r\in R$ such that
$rx\in P$ and $rM\nsubseteq P$. So there exits $y\in M$ such that
$ry\not\in P$. We have
$$I_{x}^{P}ry=rI_{x}^{P}y\subseteq r(P+Rx)\subseteq P.$$
Since $P$ is a strongly prime submodule, we should have $x\in P$
or $ry\in P$,
which is a contradiction.\\
(2) Let $x,y\in M$ and $I_{x}^{P}y\subseteq P$. If $x\not\in P$,
then $P+Rx=M$ and hence $I_{x}^{P}=R$. It follows that $y\in P$,
which completes the proof.
\end{proof}

Before we continue, let us show that a prime submodule need not
be a strongly prime (or even a strongly semiprime) submodule.
\begin{example}
Let $R$ be a ring and ${\fp}\in{\Spec}(R)$. Then $({\fp},{\fp})$
is a prime submodule of the $R$-module $R\times R$. But it is not
a strongly prime (or strongly semiprime) submodule because
$I_{(1,0)}^{({\fp},{\fp})}(1,0)\subseteq {\fp}(1,0)\subseteq
({\fp},{\fp})$, and $(1,0)\not\in({\fp},{\fp})$.
\end{example}

 \noindent {\bf Notation.} The set of all strongly prime
 submodules
of $M$ is denoted by ${\OS}_{R}(M)$.

\begin{proposition}
Let $V$ be a vector space over a field $F$. Then
$${\OS}_{F}(V)=\{W|W\;\;{\mbox{is a maximal subspace of}}\;\; V\}.$$
\end{proposition}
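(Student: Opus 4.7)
The plan is to use Proposition 1.1(2) for one direction and argue the converse by contradiction, exploiting the fact that the only ideals of a field are $0$ and $F$.

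First I would note that every maximal subspace of $V$ is strongly prime directly by Proposition 1.1(2), so the inclusion $\supseteq$ is immediate. Thus the content lies in proving that every strongly prime subspace $W$ of $V$ is in fact a maximal subspace.

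For the converse, I would suppose $W \in \OS_{F}(V)$ is not maximal and derive a contradiction. The non-maximality yields a subspace $W'$ with $W \subsetneq W' \subsetneq V$; I pick $x \in W' \setminus W$ and observe $W + Fx \subseteq W' \subsetneq V$, so I can also choose $y \in V \setminus (W + Fx)$. In particular $x \notin W$ and $y \notin W$.

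The key computation is to identify $I_{x}^{W} = \{r \in F \mid rV \subseteq W + Fx\}$. This is an ideal of the field $F$, hence equals either $0$ or $F$. If $I_{x}^{W} = F$, then $V = 1 \cdot V \subseteq W + Fx$, contradicting $y \notin W + Fx$. Therefore $I_{x}^{W} = 0$, and trivially $I_{x}^{W} y = 0 \subseteq W$. Since $W$ is strongly prime, this forces $x \in W$ or $y \in W$, contradicting the choice of $x$ and $y$. Hence $W$ must be maximal.

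The argument is short and I don't expect a serious obstacle; the only subtle point is recognizing that $I_{x}^{W}$ must be a field-ideal and hence trivial whenever $W + Fx \neq V$, which is precisely what turns the failure of maximality into a failure of the strongly prime property.
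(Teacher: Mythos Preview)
Your proof is correct and follows essentially the same route as the paper: Proposition~1.1(2) for the inclusion $\supseteq$, and for the converse you pick $x\notin W$ with $W+Fx\neq V$, observe that $I_x^W=0$ (the paper computes this directly, you phrase it via the field-ideal dichotomy), and derive a contradiction from the strongly prime condition. The only cosmetic difference is that the paper applies $I_x^W y=0\subseteq W$ to an arbitrary $y\in V$ and concludes $W=V$, whereas you pick a single $y\notin W+Fx$; both yield the same contradiction.
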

\begin{proof}
By the above proposition, every maximal subspace is strongly
prime. For the converse, suppose to the contrary that $W$ is a
strongly prime subspace of $V$ which is not a maximal subspace.
Then there exists $x\in V\setminus W$ such that $Fx+W\neq V$. For
any $y\in M$, we have
$$I_{x}^{W}y=\{r\in F|rV\subseteq Fx+W\}y=\{0\}y=\{0\}\subseteq W.$$
It follows that $y\in W$ and hence $W=V$, which is a
contradiction. Thus every strongly prime subspace is maximal.
\end{proof}
 Following Dauns (1980), we say that a proper submodule $N$ of an $R$-module
  $M$ is {\it semiprime} if whenever $r^{2}x\in N$, where $r\in R$
 and $x\in M$, then $rx\in N$.
The ring $R$ is called {\it Max-ring} if every $R$-module has a
maximal submodule. Max-Rings, which also called {\it $B$-rings},
were introduced by Hamsher (1967) and has been studied by several
authors, see for example Camillo (1975), Faith (1973, 1995),
Hirano (1998) and Koifmann (1970).

The following corollary provides characterizations of Max-rings.
\begin{corollary}
Let $R$ be a ring. Then the following are equivalent.

(1) $R$ is Max-ring.

(2) Every $R$-module has a strongly prime submodule.

(3) Every $R$-module has a prime submodule.

(4) Every $R$-module has a semiprime submodule.
\end{corollary}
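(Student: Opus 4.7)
I would prove the cycle of implications $(1)\Rightarrow(2)\Rightarrow(3)\Rightarrow(4)\Rightarrow(1)$. The first three implications are immediate from what has already been established. $(1)\Rightarrow(2)$ is Proposition~1.1(2): maximal submodules are strongly prime. $(2)\Rightarrow(3)$ is Proposition~1.1(1): strongly prime submodules are prime. For $(3)\Rightarrow(4)$ I would verify that any prime submodule $P$ is semiprime: from $r^2x\in P$ and primeness applied to $r\cdot(rx)\in P$, either $rx\in P$ or $r\in(P:M)$, and in the latter case $rx\in rM\subseteq P$.

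The substantive step is $(4)\Rightarrow(1)$. Let $M$ be a nonzero $R$-module; my aim is to produce a maximal submodule. The key tool is the lifting property: if $N\subseteq K\subseteq M$ and $K/N$ is semiprime in $M/N$, then $K$ is semiprime in $M$, because $r^2x\in K$ forces $r^2(x+N)\in K/N$, hence $r(x+N)\in K/N$, hence $rx\in K$. Combined with (4), this says that whenever $N$ is a proper semiprime submodule and $L$ is any proper submodule with $N\subsetneq L\subsetneq M$, applying (4) to $M/L$ produces a proper semiprime submodule $K/L$ of $M/L$, whence $K$ itself is a proper semiprime submodule of $M$ strictly containing $N$.

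An immediate consequence is that any \emph{maximal} proper semiprime submodule $N^{*}$ of $M$ is automatically a maximal submodule of $M$: a strictly larger proper $L$ would yield, by the above, a proper semiprime $K\supsetneq N^{*}$, contradicting maximality of $N^{*}$. The proof therefore reduces to producing such an $N^{*}$ via Zorn's lemma applied to the family of proper semiprime submodules ordered by inclusion. The union of a chain of semiprime submodules is routinely semiprime, but \emph{a priori} a chain of proper submodules of $M$ could have union equal to $M$, which would block the direct application of Zorn.

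I would handle this by fixing an element $x_{0}\in M$ lying outside some proper semiprime submodule (which exists by (4)) and restricting Zorn to the family $\mathcal{F}$ of proper semiprime submodules of $M$ avoiding $x_{0}$. Chain unions in $\mathcal{F}$ still miss $x_{0}$, hence remain proper and semiprime, so Zorn supplies a maximal element $N^{*}\in\mathcal{F}$. The main obstacle — where I expect the bulk of the technical work — is then to upgrade the maximality of $N^{*}$ within $\mathcal{F}$ to maximality among \emph{all} proper semiprime submodules, which amounts to showing that the witness $K$ furnished by the lifting step above can always be arranged to avoid $x_{0}$. I would settle this by a case analysis on whether $x_{0}\in L$ or not, together with a judicious replacement of $L$ and $x_{0}$ when needed, so as to force the resulting $K$ into $\mathcal{F}$ and thereby contradict the Zorn-maximality of $N^{*}$.
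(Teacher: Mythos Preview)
Your treatment of $(1)\Rightarrow(2)\Rightarrow(3)\Rightarrow(4)$ is correct and coincides with the paper's proof, which likewise invokes Proposition~1.1 for the first two arrows and notes the third is trivial.

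For $(4)\Rightarrow(1)$ the paper gives no direct argument at all: it simply cites Behboodi, Karamzadeh and Koohy (2004, Theorem~3.9). Your attempt at a self-contained proof is therefore a genuine departure, but it has a real gap precisely where you flag the ``bulk of the technical work''. After Zorn hands you $N^{*}$ maximal in $\mathcal{F}$, your mechanism for showing $N^{*}$ is a maximal submodule is: given $L$ with $N^{*}\subsetneq L\subsetneq M$, apply (4) to $M/L$ to obtain a proper semiprime $K\supseteq L$ and then arrange $x_{0}\notin K$, contradicting maximality of $N^{*}$ in $\mathcal{F}$. But hypothesis (4) furnishes only the \emph{existence} of a proper semiprime submodule of $M/L$; it gives no control whatsoever over which elements that submodule avoids. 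Your proposed ``case analysis on whether $x_{0}\in L$'' does not help: in either case the semiprime $K/L$ produced by (4) may perfectly well contain $x_{0}+L$, and replacing $x_{0}$ after the fact is illegitimate since $x_{0}$ was fixed before the Zorn step. Concretely, nothing in your outline excludes the following configuration: $x_{0}+N^{*}$ lies in every nonzero cyclic submodule of $M/N^{*}$, so $R(x_{0}+N^{*})$ is a simple essential socle of $M/N^{*}$; then \emph{every} proper submodule of $M$ strictly containing $N^{*}$ already contains $x_{0}$, so $N^{*}$ is genuinely maximal in $\mathcal{F}$ yet not a maximal submodule of $M$. Under (1) (equivalently (4)) a maximal submodule of $M$ above $N^{*}$ does exist, but it lies outside $\mathcal{F}$ and your construction never reaches it.

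The implication $(4)\Rightarrow(1)$ is known to be nontrivial; the arguments in the literature (via Hamsher's structure theorem for commutative Max-rings, or the proof in Behboodi et al.) use more than a bare Zorn-plus-lifting scheme. Unless you can supply a different idea that controls the position of $x_{0}$ relative to the semiprime submodules produced by (4), the argument does not close, and you should either fill that gap or, as the paper does, cite the external result.
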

\begin{proof}
(1)$\Longrightarrow$(2) and (2)$\Longrightarrow$(3) follow
easily from  Proposition 1.1.\\
(3)$\Longrightarrow$(4) is trivial and (4)$\Longrightarrow$(1)
follows  from Behboodi et al. (2004, Theorem 3.9).
\end{proof}

Next, we observe that strongly prime submodules behave naturally
under localization.
\begin{theorem} Let $M$ be an $R$-module, and let $U$ be
a multiplicatively closed subset of $R$. Then

$${\OS}_{U^{-1}R}U^{-1}M=\{U^{-1}P|P\in{\OS}_{R}M\,{\mbox{
and}}\,\,U^{-1}P\neq U^{-1}M\}.$$

If, moreover, $M$ is finitely generated, then

$${\OS}_{U^{-1}R}U^{-1}M=\{U^{-1}P|P\in{\OS}_{R}M\,{\mbox{
and}}\,\,(P:M)\cap U=\emptyset\}.$$
\end{theorem}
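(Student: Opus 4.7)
The plan is to establish both identities as a correspondence between strongly prime submodules of $U^{-1}M$ and strongly prime submodules $P$ of $M$ with $U^{-1}P\neq U^{-1}M$, via $P\mapsto U^{-1}P$ and $Q\mapsto \phi^{-1}(Q)$, where $\phi\colon M\to U^{-1}M$ is the canonical map. Two auxiliary facts drive the argument. First, $U^{-1}(I_x^P)\subseteq I_{x/1}^{U^{-1}P}$ always, since $rM\subseteq P+Rx$ implies $(r/1)U^{-1}M\subseteq U^{-1}(P+Rx)=U^{-1}P+(x/1)U^{-1}R$; the reverse inclusion holds when $M$ is finitely generated, via the uniformization step: if $r/1\in I_{x/1}^{U^{-1}P}$, each $m\in M$ admits $t_m\in U$ with $t_m rm\in P+Rx$, and over a finite generating set the product of such $t_m$ furnishes one common $t\in U$ with $tr\in I_x^P$. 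Second, $U^{-1}P\neq U^{-1}M$ forces $(P:M)\cap U=\emptyset$; combining this with Proposition~1.1 (strongly prime implies prime) yields $\phi^{-1}(U^{-1}P)=P$, because any $um\in P$ with $u\in U$ and $m\notin P$ would put $u\in (P:M)\cap U$ by primeness, a contradiction.

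For the inclusion $\supseteq$ of the first identity, take $P\in \OS_RM$ with $U^{-1}P\neq U^{-1}M$ and assume $I_{x/s}^{U^{-1}P}(y/t)\subseteq U^{-1}P$. Absorbing the units $s/1$ and $t/1$ reduces to $I_{x/1}^{U^{-1}P}(y/1)\subseteq U^{-1}P$; the first auxiliary fact then gives $U^{-1}(I_x^P)(y/1)\subseteq U^{-1}P$; applied to each $r\in I_x^P$, this yields $ry/1\in U^{-1}P$, and the second auxiliary fact promotes this to $ry\in P$. Hence $I_x^Py\subseteq P$, and strong primeness of $P$ produces $x\in P$ or $y\in P$, whence $x/s$ or $y/t$ lies in $U^{-1}P$.

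For the reverse inclusion, let $Q\in \OS_{U^{-1}R}(U^{-1}M)$ and set $P=\phi^{-1}(Q)$; routine localization arguments give $U^{-1}P=Q$ and $P\neq M$. To show $P$ is strongly prime, assume $I_x^Py\subseteq P$; I will verify $I_{x/1}^Q(y/1)\subseteq Q$ and then invoke strong primeness of $Q$. Given $a/s\in I_{x/1}^Q$, the unit $s/1$ reduces to $a/1\in I_{x/1}^Q$; by the reverse direction of the first auxiliary fact (this is where finite generation is used) some $t\in U$ satisfies $ta\in I_x^P$; then $tay\in I_x^Py\subseteq P$, so $(a/s)(y/1)=tay/(ts)\in U^{-1}P=Q$. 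Strong primeness of $Q$ then yields $x/1\in Q$ or $y/1\in Q$, i.e., $x\in P$ or $y\in P$, as required.

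The second identity follows from the first via the equivalence, for finitely generated $M$, between $U^{-1}P=U^{-1}M$ and $(P:M)\cap U\neq \emptyset$: the reverse direction always holds, and the forward direction combines denominators from the finite generators of $M$ into a single $u\in (P:M)\cap U$. The main obstacle is the reverse inclusion of the first auxiliary fact, namely the passage from a family of denominators $\{t_m\}_{m\in M}$ to a single uniform $t\in U$; this is the sole step where finite generation of $M$ is truly essential, and it is the technical core of both formulas.
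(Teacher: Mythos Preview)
Your argument for the inclusion $\supseteq$ of the first identity, and your derivation of the second identity from the first, are correct and match the paper's proof closely. The gap is in the reverse inclusion $\subseteq$ of the \emph{first} identity: to show that $P=\phi^{-1}(Q)$ is strongly prime you invoke the ``reverse direction of the first auxiliary fact,'' which you yourself flag as requiring $M$ to be finitely generated. But the first identity is stated for an arbitrary $R$-module $M$, so this step is not licensed by the hypotheses. The paper does not fill this gap either; it simply asserts that ``it is easy to see that $Q=U^{-1}P$ and $P\in\OS_R M$'' without further argument.

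In fact the first identity is false without finite generation, so the gap cannot be repaired. Take $R=\mathbb{Z}$, $U=\mathbb{Z}\setminus\{0\}$, and $M=\mathbb{Z}\oplus\mathbb{Q}$, so that $U^{-1}M=\mathbb{Q}^2$. The subspace $Q=\mathbb{Q}\oplus 0$ is maximal in $\mathbb{Q}^2$, hence strongly prime by Proposition~1.1(2). Any submodule $P'$ of $M$ with $U^{-1}P'=Q$ must lie inside $\phi^{-1}(Q)=\mathbb{Z}\oplus 0$, so $P'=n\mathbb{Z}\oplus 0$ for some $n\geq 1$. For $x=(0,1)$ one has $P'+Rx=n\mathbb{Z}\oplus\mathbb{Z}$, whence
\[
I_x^{P'}=\Ann_R\bigl(M/(P'+Rx)\bigr)=\Ann_R\bigl((\mathbb{Z}/n\mathbb{Z})\oplus(\mathbb{Q}/\mathbb{Z})\bigr)=0.
\]
Thus $I_x^{P'}y=0\subseteq P'$ for every $y\in M$ while $x\notin P'$, so no such $P'$ is strongly prime. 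Hence $Q\in\OS_{U^{-1}R}(U^{-1}M)$ is not of the form $U^{-1}P'$ with $P'\in\OS_R M$. Your proof of the $\subseteq$ direction---and indeed the first formula itself---is valid only under the additional hypothesis that $M$ is finitely generated.
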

\begin{proof}
First assume that $P\in{\OS}_{R}M$ and $U^{-1}P\neq U^{-1}M$. We
show that $U^{-1}P\in{\OS}_{U^{-1}R}U^{-1}M$. Let
$I_{x_{1}/u_{1}}^{U^{-1}P}x_{2}/u_{2}\subseteq U^{-1}P$, where
$x_{1}/u_{1},x_{2}/u_{2}\in U^{-1}M$. We claim that
$I_{x_1}^{P}x_{2}\subseteq P$. If $r\in I_{x_1}^{P}$, then
$rM\subseteq P+Rx_{1}$ and hence
$$(r/1)U^{-1}M\subseteq
U^{-1}P+U^{-1}R(x_{1}/1)=U^{-1}P+U^{-1}R(x_{1}/u_{1}).$$
Therefore $(r/1)(x_{2}/u_{2})\subseteq U^{-1}P$ and so there
exist $p\in P$ and $v_{1}, v_{2}\in U$ such that
$v_{2}(v_{1}rx_{2}-pu_{2})=0$. This implies that
$(v_{1}v_{2})rx_{2}\in P$.  On the other hand, it is easy to see
that $U^{-1}P\neq U^{-1}M$ implies $(P:M)\cap U=\emptyset$. So we
have $rx_{2}\in P$. Thus $I_{x_1}^{P}x_{2}\subseteq P$. It
follows that $x_{1}\in P$ or $x_{2}\in P$ and hence
$(x_{1}/u_{1})\in U^{-1}P$ or
$(x_{2}/u_{2})\in U^{-1}P$, as desired.\\
Now let $Q\in{\OS}_{U^{-1}R}U^{-1}M$. Set $P=\{x\in
M|x/1\in Q\}$. It is easy to see that $Q=U^{-1}P$ and $P\in{\OS}_{R}M$ and thus we are done.\\
For the second assertion, it is enough to show that $(P:M)\cap
U=\emptyset$ implies that $U^{-1}P\neq U^{-1}M$. Suppose on the
contrary that $U^{-1}P= U^{-1}M$.  Since $M$ is finitely
generated, we may assume that there exist elements
$x_{1},x_{2},\ldots,x_{n}\in M$ that generate $M$. For each $1\leq
i\leq n$ there exist $u_{i},v_{i}\in U$ and $p_{i}\in P$ such that
$v_{i}(u_{i}x_{i}-p_{i})=0$. If $t=v_{1}\ldots v_{n}u_{1}\ldots
u_{n}$, then $t\in(P:M)\cap U$, which is a contradiction.
\end{proof}

The following is an immediate consequence of Theorem 1.5.
\begin{corollary}
Let $M$ be a  finitely generated $R$-module and $U$ be a
multiplicatively closed subset of $R$. Then there is a bijective
inclusion-preserving mapping
\begin{eqnarray*}
\{P\in{\OS}_{R}M| (P:M)\cap U=\emptyset\}&\longrightarrow&{\OS}_{U^{-1}R}U^{-1}M\\
P&\longmapsto& U^{-1}P.
 \end{eqnarray*}
whose inverse is also inclusion-preserving.
\end{corollary}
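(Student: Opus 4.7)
The plan is to leverage Theorem~1.5 directly: its second assertion already establishes that the map $P\mapsto U^{-1}P$ sends $\{P\in\OS_R M\mid (P:M)\cap U=\emptyset\}$ onto $\OS_{U^{-1}R}U^{-1}M$, so surjectivity is free. It is also obvious that $P_1\subseteq P_2$ implies $U^{-1}P_1\subseteq U^{-1}P_2$, so the forward map is inclusion-preserving. Everything else reduces to a single contraction identity, namely that for any $P\in\OS_R M$ with $(P:M)\cap U=\emptyset$ we have $P=\{x\in M\mid x/1\in U^{-1}P\}$.

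To prove this identity, the inclusion $\subseteq$ is trivial. For $\supseteq$, suppose $x/1\in U^{-1}P$, so $x/1=p/u$ for some $p\in P$ and $u\in U$; clearing denominators, there exists $v\in U$ with $v(ux-p)=0$, whence $tx\in P$ for $t:=vu\in U$. Here I use Proposition~1.1(1): $P$ is strongly prime, hence prime, so from $tx\in P$ one obtains either $x\in P$ or $t\in(P:M)$. The second possibility is ruled out by the hypothesis $(P:M)\cap U=\emptyset$, so $x\in P$.

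With this identity in hand, both remaining statements are immediate. Injectivity: if $U^{-1}P_1=U^{-1}P_2$, then
$$P_1=\{x\in M\mid x/1\in U^{-1}P_1\}=\{x\in M\mid x/1\in U^{-1}P_2\}=P_2.$$
Inverse inclusion-preserving: if $U^{-1}P_1\subseteq U^{-1}P_2$ and $x\in P_1$, then $x/1\in U^{-1}P_1\subseteq U^{-1}P_2$, so $x\in P_2$, yielding $P_1\subseteq P_2$.

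There is no real obstacle here; Theorem~1.5 is doing the heavy lifting. The one conceptual point worth emphasising is that the contraction step, which in the classical ring setting is a transparent computation with prime ideals, goes through in this module setting precisely because strongly prime submodules are prime (Proposition~1.1(1)); without this implication one could not discharge the dichotomy $x\in P$ or $t\in(P:M)$, and the entire bijection could fail.
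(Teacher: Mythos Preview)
Your proof is correct and follows the paper's approach: the paper simply records this corollary as ``an immediate consequence of Theorem~1.5,'' and you have filled in precisely the details (surjectivity from the second assertion of Theorem~1.5, injectivity and inverse inclusion-preservation via the contraction identity $P=\{x\in M\mid x/1\in U^{-1}P\}$) that the paper leaves implicit. The contraction argument you give, using Proposition~1.1(1), is essentially the same one used inside the proof of Theorem~1.5 itself.
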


Let $N$ be a  proper submodule of $M$. The {\it strongly prime
radical} of $N$ in $M$, denoted ${\SR}(N)$, is defined to be the
intersection of all strongly prime submodules of $M$ containing
$N$. If there is no strongly prime submodule containing $N$, then
we put ${\SR}(N)=M$.

We conclude this section with a good justification for the study
of strongly prime submodules. In fact, as it mentioned in the
introduction it is not true that every semiprime submodule of an
$R$-module $M$ is an intersection of prime submodules, see
Jenkins and Smith (1992), but our next theorem shows that as in
the ideal case, this is true for strongly semiprime submodules.
\begin{theorem}
Let $C$ be a strongly semiprime submodule of an $R$-module $M$.
Then $C$ is an intersection of some strongly prime submodules of
$M$.
\end{theorem}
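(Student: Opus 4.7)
The plan is to prove the contrapositive point-by-point: I will show that for every $x_{0}\in M\setminus C$ there exists a strongly prime submodule $P$ with $C\subseteq P$ and $x_{0}\notin P$. Once this is established, taking the intersection of all such $P$ (one for each $x_{0}\notin C$) visibly recovers $C$, since any element outside $C$ is excluded from at least one of them.

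The construction has two ingredients. First, using the strongly semiprime hypothesis in its contrapositive form, starting from $x_{0}\notin C$ I can inductively build a sequence $x_{0},x_{1},x_{2},\dots$ in $M\setminus C$: having $x_{n}\notin C$, the condition forces $I_{x_{n}}^{C}x_{n}\not\subseteq C$, so I may pick $r_{n}\in I_{x_{n}}^{C}$ with $r_{n}x_{n}\notin C$, and set $x_{n+1}:=r_{n}x_{n}$. The crucial built-in property is the inclusion $r_{n}M\subseteq C+Rx_{n}$. Second, I apply Zorn's Lemma to the family of submodules $N\supseteq C$ satisfying $x_{n}\notin N$ for every $n\ge 0$ (nonempty since $C$ belongs to it) and pick a maximal element $P$.

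The main step is then to show $P\in\OS_{R}(M)$. Assume toward contradiction that there are $y,z\in M\setminus P$ with $I_{y}^{P}z\subseteq P$. By maximality of $P$, both $P+Ry$ and $P+Rz$ meet the sequence, so $x_{i}\in P+Ry$ and $x_{j}\in P+Rz$ for some $i,j$. The key observation which I expect to be the heart of the proof is an \emph{absorption} property: if $x_{k}=p+ay\in P+Ry$, then $x_{k+1}=r_{k}p+r_{k}ay\in P+Ry$, and inductively $x_{n}\in P+Ry$ for all $n\ge i$; likewise for $P+Rz$. Choosing $n\ge\max(i,j)$ and writing $x_{n}=q+bz$ with $q\in P$, the construction gives $r_{n}M\subseteq C+Rx_{n}\subseteq P+Ry$, so $r_{n}\in I_{y}^{P}$ and hence $r_{n}z\in P$ by hypothesis on $y,z$. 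Then $x_{n+1}=r_{n}x_{n}=r_{n}q+b(r_{n}z)\in P$, contradicting the defining property of $P$.

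The likely obstacle is precisely this last step: without the inclusion $r_{n}M\subseteq C+Rx_{n}$ baked into the sequence, there would be no reason for the witness $r_{n}$ to belong to $I_{y}^{P}$, so the strongly prime testing condition could not be activated. The elegance of the argument is that the strongly semiprime hypothesis supplies both the non-triviality of the sequence (keeping it outside $C$) and the ideal membership needed to close the contradiction.
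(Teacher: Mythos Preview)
Your proof is correct and follows the same architecture as the paper's: construct the sequence $(x_n)$ from the strongly semiprime hypothesis, apply Zorn's Lemma to submodules containing $C$ and disjoint from the sequence, and derive a contradiction if a maximal such $P$ fails to be strongly prime. Your absorption step (pushing both witnesses up to a common index $n\ge\max(i,j)$ so that $r_n\in I_y^P$ directly) is a slightly cleaner alternative to the paper's case split on whether $i\ge j$ or $i<j$, but both rest on the same fact that each $x_{k+1}$ is an $R$-multiple of $x_k$.
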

\begin{proof}
It is enough to show that ${\SR}(C)\subseteq C$. Let $x\in
M\setminus C$. We define $T=\{x_{0},x_{1},\ldots\}$ inductively
as follows: $x_{0}=x$, $x_{1}\in I_{x_{0}}^{C}x_{0}\setminus C$,
$x_{2}\in I_{x_{1}}^{C}x_{1}\setminus C$,$\ldots$, etc. Set
$$\Omega=\{K\leq M\,|\, C\subseteq K,\,\,K\cap T=\emptyset\}.$$
$\Omega\neq\emptyset$, since $C\in\Omega$. Then by Zorn's lemma
$\Omega$ has a maximal element, say $P$. We claim that $P$ is a
strongly prime submodule of $M$. Suppose on the contrary that
$x,y\in M\setminus P$ and $I_{x}^{P}y\subseteq P$. Since
$x,y\not\in P$, we have $(P+Rx)\cap T\neq\emptyset$ and
$(P+Ry)\cap T\neq\emptyset$. So there exist $r_{1},r_{2}\in R$
and $p_{1},p_{2}\in P$ and $x_{i},x_{j}\in T$ such that
$p_{1}+r_{1}x=x_{i}$ and $p_{2}+r_{2}y=x_{j}$. We have
$$I_{x_{i}}^{C}(x_{j}-p_{2})\subseteq I_{x_{i}}^{P}(x_{j}-p_{2})=I_{r_{1}x+p_{1}}^{P}
(x_{j}-p_{2})\subseteq I_{x}^{P}(x_{j}-p_{2})=I_{x}^{P}
r_{2}y\subseteq I_{x}^{P}y\subseteq P$$ If $i\geq j$, then there
exists $a\in R$ such that $x_{i}=ax_{j}$ and hence
$$I_{x_{i}}^{C}(x_{i}-p_{2})=I_{x_{i}}^{C}(ax_{j}-p_{2})\subseteq
I_{x_{i}}^{C}(x_{j}-p_{2})\subseteq P.$$ Since $x_{i+1}\in
I_{x_{i}}^{C}x_{i}$, we have $x_{i+1}\in P$, which is a
contradiction. If $i<j$, then there exists $b\in R$ such that
$x_{j}=bx_{i}$ and hence
$$I_{x_{j}}^{C}(x_{j}-p_{2})=I_{bx_{i}}^{C}(x_{j}-p_{2})\subseteq
I_{x_{i}}^{C}(x_{j}-p_{2})\subseteq P.$$ Since $x_{j+1}\in
I_{x_{j}}^{C}x_{j}$, we  have $x_{j+1}\in P$, which is  again a
contradiction. Therefore $P$ is a strongly prime and hence
$x\not\in{\SR}(C)$ and the proof is complete.
\end{proof}

\section{A Generalized Principal Ideal Theorem for Modules}
The Generalized Principal Ideal Theorem (GPIT) states that if $R$
is a Noetherian rings and ${\fp}$ is a minimal prime ideal of an
ideal $(a_{1},\ldots,a_{n})$ generated by $n$ elements of $R$,
then ${\mbox{ht}}{\fp}\leq n$. Consequently,
${\mbox{ht}}(a_{1},\ldots, a_{n})\leq n$, where for an ideal $I$
of $R$, ${\mbox{ht}}I$ denotes the height of $I$.

Krull proved this theorem by induction on $n$. The case $n=1$ is
then the hardest part of the proof. Krull called the $n=1$ case
the Principal Ideal Theorem (PIT).

 \begin{remark}
 The PIT is one of the cornerstones of dimension
  theory for Noetherian rings, see Eisenbud (1995, Theorem 10.1).
   Indeed, Kaplansky (1974, page 104)
   call it ``the most important single theorem in
 the theory of Noetherian rings''.
\end{remark}
  It is natural to ask if the GPIT can be
 extended to modules. Nishitani (1998), has proved that the GPIT holds for modules.
  The aim of this section is to give
  an alternative generalization of GPIT to modules. For this purpose we need to define some
 notions.\\

Let $P$ be a strongly prime submodule of $M$. We shall say that
$P$ is  {\it strongly minimal prime} over a submodule $N$ of $M$,
if $N\subseteq P$ and there does not exist a strongly prime
submodule $L$ of $M$ such that $N\subseteq L\subset P$.
\begin{definition}
(1) Let $P$ be a strongly prime submodule of $M$. The {\it {
strong height}} of $P$, denoted ${\height}_{R}{P}$, is defined by
$${\height}_{R}P={\sup}\{n|\exists\; {P}_{0},{P}_{1},\ldots,{P}_{n}\in{\OS}_{R}M\;\;
 {\mbox{such that}}\;\; {P}_{0}\subset{P}_{1}\subset\cdots\subset{P}_{n}={P}\}.$$
\noindent (2) Let ${N}$ be a proper submodule of an $R$-module
$M$. The {\it {strong height}} of ${N}$, denoted
${\height}_{R}{N}$, is defined by
$${\height}_{R}{N}={\min}\{{\height}_{R}{P}| {P}\in{\OS}_{R}{M},\; P\;
{\mbox{is strongly minimal prime over}}\;N\} .$$
\end{definition}

\begin{theorem}
Let $R$ be a ring and $M$ be a Noetherian flat $R$-module. Let $N$
be a proper submodule of $M$ generated by $n$ elements
$x_{1},\ldots,x_{n}\in M$. Then ${\height}_{R}N\leq n.$
\end{theorem}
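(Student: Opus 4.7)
The plan is to adapt Krull's classical proof of the generalized principal ideal theorem to the module setting, using the machinery of Section 1 (especially Corollary 1.6) together with the flatness hypothesis. Let $P$ be a strongly prime submodule minimal over $N$ whose strong height realizes $\height_R N$, and put $\fp = (P:M)$; by Proposition 1.1, $P$ is prime, so $\fp \in \Spec R$. Since $M$ is Noetherian it is finitely generated, and Corollary 1.6 with $U = R \setminus \fp$ yields an order-preserving bijection covering every strongly prime submodule contained in $P$. Flatness and Noetherianness pass to $M_{\fp}$, and $N_{\fp}$ is still $n$-generated, so I may assume $(R,\fm)$ is local with $\fm = (P:M)$.

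The argument then proceeds by induction on $n$. For $n = 0$, strong minimality of $P$ over the zero submodule means no strongly prime submodule sits strictly below $P$, so $\height_R P = 0$. Assume the result for fewer than $n$ generators and fix a chain $P_0 \subsetneq P_1 \subsetneq \cdots \subsetneq P_k = P$ of strongly prime submodules; the goal is $k \leq n$. Strong minimality of $P$ over $N$ forces $N \not\subseteq P_{k-1}$, so after relabelling I may assume $x_n \notin P_{k-1}$.

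The heart of the proof is to produce a submodule $N' \subseteq P_{k-1}$ generated by $n-1$ elements $y_1, \ldots, y_{n-1}$ such that $P_{k-1}$ is strongly minimal prime over $N'$ (after possibly replacing $P_{k-1}$ by another strongly prime submodule strictly between $P_{k-2}$ and $P$). Mirroring Krull's trick of writing $a_i^{m_i} = y_i + c_i a_n$ with $y_i \in \fp_{k-1}$, I would invoke flatness of $M$ to lift the corresponding ring-level relations to element-level relations in $M$, using the ideal $I_{x_n}^{P_{k-1}}$ to play the role of ``sufficient scalars.'' Strong minimality of $P_{k-1}$ over $N'$ would then follow by observing that any strongly prime submodule strictly between $N'$ and $P_{k-1}$, combined with $Rx_n$, would produce a strongly prime submodule strictly between $N$ and $P$, contradicting minimality of $P$ over $N$.

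Once $P_{k-1}$ is strongly minimal prime over the $(n-1)$-generator submodule $N'$, the induction hypothesis gives $\height_R P_{k-1} \leq n - 1$, while the chain $P_0 \subsetneq \cdots \subsetneq P_{k-1}$ shows $\height_R P_{k-1} \geq k - 1$, forcing $k \leq n$ and completing the induction. The main obstacle is the middle step: constructing the $y_i$'s and verifying strong minimality of $P_{k-1}$ over $N'$ amounts to the module-theoretic Principal Ideal Theorem (case $n = 1$) combined with a change-of-generators argument, and this is precisely where flatness carries the weight, converting Krull's ideal arithmetic into valid element-level relations in $M$.
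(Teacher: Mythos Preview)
Your reduction to the local case via Corollary~1.6 is fine and matches the paper, but from that point on your approach diverges sharply from the paper's, and the divergence is where your argument breaks down.

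The paper does \emph{not} run an induction on $n$ in the module category. Instead, once $(R,\fp)$ is local it uses the flatness hypothesis for exactly one purpose: a flat finitely generated module over a local ring is free, say $M\cong R^{m}$. Proposition~1.3 then forces $\dim_{R/\fp}(M/P)=1$, so one may choose a basis $e_{1},\dots,e_{m}$ with $e_{1},\dots,e_{m-1}\in P$ and $e_{m}\notin P$, whence $P=Re_{1}+\cdots+Re_{m-1}+\fp e_{m}$. Writing each generator $x_{j}=\sum_{i}a_{ij}e_{i}$, the last coordinates $a_{m1},\dots,a_{mn}$ lie in $\fp$, and one shows $\fp$ is minimal over the ideal $(a_{m1},\dots,a_{mn})$ by checking that $Q=Re_{1}+\cdots+Re_{m-1}+\fq e_{m}$ is strongly prime for any prime $\fq$ (since $M/Q\cong R/\fq$). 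A short argument with $I_{x}^{P_{0}}$ shows that a strict chain $P_{0}\subset\cdots\subset P_{\ell}=P$ of strongly prime submodules yields a strict chain $(P_{0}:M)\subset\cdots\subset(P_{\ell}:M)=\fp$ of prime ideals, and the classical GPIT for rings finishes the job.

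Your inductive scheme, by contrast, never exploits freeness, and the ``middle step'' you flag as an obstacle is in fact a genuine gap. Krull's trick relies on the multiplicative structure of $R$: one uses that $\fp=\sqrt{\fp_{k-1}+(a_{n})}$ to write $a_{i}^{N_{i}}=b_{i}+c_{i}a_{n}$ with $b_{i}\in\fp_{k-1}$. In a module there is no product of elements and no analogue of $x_{i}^{N_{i}}$; the ideals $I_{x}^{P}$ do not furnish the needed relations, and ``lifting ring-level relations via flatness'' has no concrete content here without first choosing a basis---which is exactly what the paper does. Moreover, your claim that a strongly prime $L$ with $N'\subseteq L\subsetneq P_{k-1}$ yields, by ``combining with $Rx_{n}$,'' a strongly prime submodule strictly between $N$ and $P$ is unjustified: $L+Rx_{n}$ need not be strongly prime, and there is no evident way to manufacture one from it. The paper's route through freeness and Proposition~1.3 is not merely a stylistic alternative; it is what makes the reduction to the ring-theoretic GPIT go through.
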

\begin{proof}
 Replacing $R/(0:M)$ by $R$, we can assume that $R$ is a
 Noetherian ring. Let ${\height}_{R}N=\ell$. Then there is a submodule $P$ of $M$ such that
 $P$ is strongly minimal prime over $N$ and ${\height}_{R}P=\ell$. Let  ${\fp}=(P:M)$ and
  $U=R\setminus{\fp}$. By
  Corollary 1.6, ${\height}_{R}N={\height}_{U^{-1}R}U^{-1}N$. Thus replacing $U^{-1}R$ by
  $R$, we can assume that $R$ is a Noetherian local ring with
  maximal ideal ${\fp}$. Because $M$ is a flat module over a local
  ring, it is free with finite rank, say $m$. Since $M/P$ is an
  $R/{\fp}$-vector space and $(0)$ is a strongly prime submodule
  of $M/P$, by Proposition 1.3, we have ${\dim}_{R/{\fp}}M/P=1$. Hence there exists a basis
   $\{e_{1},e_{2},\ldots,e_{m}\}$ for $M$ such that $e_{1},e_{2},\ldots,e_{m-1}\in P$
  and $e_{m}\not\in P$. We have
  $P=Re_{1}+Re_{2}+\ldots+Re_{m}+{\fp}e_{m}$. There are elements
  $a_{1j},a_{2j},\ldots,a_{m-1j}\in R$ and $a_{mj}\in{\fp}$ such that
  $x_{j}=a_{1j}e_{1}+a_{2j}e_{2}+\ldots+a_{mj}e_{m}$. Let ${\fq}$
  be a minimal prime ideal over an ideal
  $(a_{m1},a_{m2},\ldots,a_{mn})$ and $Q$ denotes the submodule
    $Re_{1}+Re_{2}+\ldots+Re_{m}+{\fq}e_{m}$. Since
    $M/Q\cong R/{\fq}$, $Q$ is a strongly prime submodule
    and hence $P=Q$, by the minimality of $P$. Hence ${\fp}={\fq}$
    holds and so ${\fp}$ is a minimal prime over an ideal generating
    by $n$ elements. Since  ${\height}_{R}P=\ell$, we  can consider the
    following chain of distinct strongly prime submodules of $M$
    $$P_{0}\subset P_{1}\subset\ldots\subset P_{\ell}=P.$$
    We claim that the above chain induces a chain
$$(P_{0}:M)\subset(P_{1}:M)\subset\ldots\subset (P_{\ell}:M)={\fp}$$
of distinct prime ideals of $R$. It is enough to show that
$(P_{0}:M)\subset(P_{1}:M)$. The containment
$(P_{0}:M)\subseteq(P_{1}:M)$ is always true. Suppose that
$(P_{0}:M)=(P_{1}:M)$. Then for any $x\in P_{1}\setminus P_{0}$
and any $y\in M$, we have

$$I_{x}^{P_{0}}y\subseteq I_{x}^{P_{1}}y=\{r\in R|rM\subseteq
 P_{1}\}y=(P_{1}:M)y=(P_{0}:M)y\subseteq P_{0}.$$
Since $P_{0}$ is strongly prime and $x\not\in P_{0}$, we have
$y\in P_{0}$ and hence $P_{0}=M$ which is a contradiction. Thus
$(P_{0}:M)\subset(P_{1}:M)$. Now by the GPIT for rings, we have
$\ell\leq {\mbox{ht}}_{R}{\fp}\leq n$. This completes the proof.
\end{proof}

\section*{Acknowledgments} I thank Javad
Asadollahi for his suggestions and comments. I also thank the
referee for many careful comments, and Sharekord University for
the financial support.

\end{document}